\newtheorem{theorem}{Theorem}[section]
\newtheorem{lemma}[theorem]{Lemma}
\newtheorem{proposition}[theorem]{Proposition}
\theoremstyle{definition}
\newtheorem{definition}[theorem]{Definition}
\theoremstyle{remark}
\numberwithin{equation}{section}
\def\R{\mathbb R}
\def\d{{\rm d}}
\def\qqfa{\quad{\rm for\ all}\quad}
\def\dist{{\rm dist}}
\def\A{{\rm A}}
\begin{document}

\title{Embedding of global attractors and their dynamics}

\author{Eleonora Pinto de Moura}
\address{Mathematical Institute, University of Warwick, Coventry, CV4 7AL, UK}
\curraddr{} \email{e.pinto-de-moura@warwick.ac.uk}
\thanks{EPM is sponsored by CAPES and would like to thank CAPES for all their support during her PhD}

\author{James C. Robinson}
\address{Mathematical Institute, University of Warwick, Coventry, CV4 7AL UK}
\curraddr{} \email{j.c.robinson@warwick.ac.uk}
\thanks{JCR and JJSG are supported by an EPSRC Leadership Fellowship EP/G007470/1.}

\author{J. J. S\'anchez-Gabites}
\address{Mathematical Institute, University of Warwick, Coventry, CV4 7AL UK}
\curraddr{} \email{j.j.sanchez-gabites@warwick.ac.uk}

\subjclass[2010]{37L30, 54H20, 57N60}

\date{}

\dedicatory{}


\begin{abstract}
Using shape theory and the concept of cellularity, we show that if
$\mathcal A$ is the global attractor associated with a dissipative
partial differential equation in a real Hilbert space $H$ and the
set $\mathcal A-\mathcal A$ has finite Assouad dimension $d$, then
there is an ordinary differential equation in $\R^{m+1}$, with
$m>d$, that has unique solutions and reproduces the dynamics on
$\mathcal A$. Moreover, the dynamical system generated by this new
ordinary differential equation has a global attractor $\mathcal X$
arbitrarily close to $L\mathcal A$, where $L$ is a homeomorphism
from $\mathcal{A}$ into $\mathbb{R}^{m+1}$.
\end{abstract}

\maketitle

\section{Introduction}

In this paper we discuss the problem of finding a finite-dimensional
description of the asymptotic dynamics of dissipative partial
differential equations
\begin{equation}\label{eqG}
\frac{\d u}{\d t}= \mathcal G(u), \quad u \in H,
\end{equation}
where $H$ is a real separable Hilbert space with norm $\|\cdot\|$.
The evolution of the dynamical system generated by such an equation
is described by a continuous semigroup $\{S(t)\}_{t\ge0}$ of
solution operators defined by
$$S(t)u_0=u(t;u_0), \qqfa t \ge 0,$$
where $u(t;u_0)$ is the solution of the equation with initial
condition $u_0$.

Much of the long-term behaviour of the solutions of partial
differential equations can in many cases be described by global
attractors (see Langa and Robinson (1999), for example).

 \begin{definition}\label{attractor}
Let $H$ be a Hilbert space, and let $S(t)$ be a continuous semigroup
defined on $H$. A {\em global attractor} $\mathcal A \subset H$ is a
compact invariant set, i.e.
\begin{equation*}
S(t)\mathcal A=\mathcal A \qqfa t\ge0,
\end{equation*}
that attracts all bounded sets, i.e.
\begin{equation}\label{dist SX A}
{\rm dist}(S(t)B, \mathcal A)\longrightarrow 0 \quad {\rm as} \quad
t\longrightarrow\infty,
\end{equation}
for any bounded set $B \subset H$. If a global attractor $\mathcal
A$ exists, then it is unique.
 \end{definition}
 \noindent The distance in \eqref{dist SX A} is Hausdorff
semidistance between two non-empty subsets $X,Y \subset H$,
$$\dist (X,Y)= \sup_{x\in X}\inf_{y \in Y}\|x-y\|.$$

Although defined on an infinite-dimensional space, many dissipative
partial differential equations possess finite-dimensional global
attractors. This is the case, for instance, for the
Kuramoto-Sivashinsky equation and the 2D Navier-Stokes equations
(see Constantin and Foias (1988), Eden et al.\ (1994), Teman (1997),
Robinson (2001) and Chepyzhov and Vishik (2002) for a more detailed
study). It is therefore natural to seek a finite-dimensional system
of ordinary differential equations in some $\R^m$
\begin{equation}\label{ODE_IMP}
 \frac{{\rm d}x}{{\rm d}t}=\mathcal F(x)
\end{equation}
whose asymptotic behaviour reproduces that of the original equation.
Ideally,
\begin{itemize}
\item[(i)] the attractor $\mathcal{A}$ would be embedded in
$\mathbb{R}^m$ via some homeomorphism $L : \mathcal{A}
\longrightarrow L \mathcal{A} \subseteq \mathbb{R}^m$,
\item[(ii)] the dynamics of \eqref{ODE_IMP} on $L
\mathcal{A}$ would reproduce those of \eqref{eqG} on $\mathcal{A}$,
i.e. $\mathcal F(x)=L\mathcal G (L^{-1}x)$, for every $x \in L
\mathcal A$, and
\item[(iii)] $L \mathcal{A}$ would be the global attractor for \eqref{ODE_IMP}.
\end{itemize}

The existence of such a system of ordinary differential equations
has only been proved for certain dissipative equations that possess
an inertial manifold. Introduced by Foias et al.\ (1985), inertial
manifolds are positively invariant finite-dimensional Lipschitz
manifolds that contain the global attractor and attract all
trajectories at an exponential rate (see Constantin and Foias
(1988), Constantin et al.\ (1989), Foias et al.\ (1988a), Foias et
al.\ (1988b), Teman (1997), for more details). Foias et al.\ (1985)
showed that if a certain spectral gap condition holds then the
system possesses an inertial manifold. Unfortunately this condition
is very restrictive and there are many equations, such as the 2D
Navier-Stokes equations, for which it is not satisfied. Thus it is
desirable to adopt alternative approaches to the problem described
above.

Following the approach pioneered by Eden et al.\ (1994), the main
result of this paper is the following.

\begin{theorem}\label{prop:def} Suppose that the dissipative
partial differential equation
\begin{equation}\tag{\ref{eqG}}
\frac{\d u}{\d t}= \mathcal G(u), \quad u \in H,
\end{equation}
has a global attractor $\mathcal{A}$ such that
\[d := {\rm dim}_A(\mathcal{A} - \mathcal{A}) < \infty,\]
where ${\rm dim}_A$ denotes Assouad dimension. Assume that
$\mathcal{G}$ is Lipschitz continuous on $\mathcal{A}$. Then, for
any $m> \max \{d+1,6\}$ and any prescribed $\varepsilon> 0$, there
exist a system of ordinary differential equations
\begin{equation}\tag{\ref{ODE_IMP}}
 \frac{{\rm d}x}{{\rm d}t}=\mathcal F(x)
\end{equation}
 in $\R^{m}$ and a bounded linear map $L:H \longrightarrow \R^{m}$ such that:
\begin{enumerate}
    \item[\textit{1.}] the ODE \eqref{ODE_IMP} has unique solutions,
    \item[\textit{2.}] the restriction $L |_{\mathcal{A}} : \mathcal{A} \longrightarrow
    L \mathcal{A}$ is an embedding whose image $L \mathcal{A}$ is invariant under the dynamics of \eqref{ODE_IMP},
    \item[\textit{3.}] for every solution $u(t)$ of \eqref{eqG} on the attractor $\mathcal{A}$
    there exists a unique solution $x(t)$ of \eqref{ODE_IMP} such that \[u(t) = L^{-1}(x(t)),\]
    \item[\textit{4.}] the ODE \eqref{ODE_IMP} has a global attractor $\mathcal{X}$ that contains
    $L \mathcal{A}$ and is contained in the $\varepsilon$--neighbourhood of $L\mathcal{A}$,
    i.e.\ $\dist_{\rm H}(\mathcal X, L \mathcal A) \le \varepsilon$.
\end{enumerate}
\end{theorem}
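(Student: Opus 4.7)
The strategy is to embed $\mathcal{A}$ linearly into $\mathbb{R}^m$, transport the PDE dynamics along the embedding, and then extend the resulting vector field to all of $\mathbb{R}^m$ in a way that preserves uniqueness of solutions and produces a dissipative flow whose attractor is arbitrarily close to the image of $\mathcal{A}$.

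First I would produce the map $L$ by applying an Assouad-dimension refinement of the Hunt--Kaloshin embedding theorem (available in this setting because $\dim_A(\mathcal{A}-\mathcal{A}) = d < \infty$ and $m > d+1$). The refinement yields a bounded linear $L: H \to \mathbb{R}^m$ such that $L|_{\mathcal{A}}$ is injective and the inverse $L^{-1} : L\mathcal{A} \to \mathcal{A}$ is log-Lipschitz of some order, a modulus stronger than Hölder, which will be essential later. This already delivers part 2 at the set-theoretic level. The dynamics on $L\mathcal{A}$ are forced to be given by the pull-back $\mathcal{F}_0 := L \circ \mathcal{G} \circ L^{-1}$; since $\mathcal{G}$ is Lipschitz on $\mathcal{A}$ and $L$ is linear, $\mathcal{F}_0$ inherits the log-Lipschitz modulus of $L^{-1}$.

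Next, I would extend $\mathcal{F}_0$ to a continuous vector field $\mathcal{F}$ on $\mathbb{R}^m$ via a Whitney-type construction, preserving the log-Lipschitz modulus. Because log-Lipschitz functions satisfy Osgood's criterion, the ODE \eqref{ODE_IMP} has unique solutions (part 1). By construction, if $u(t)$ solves \eqref{eqG} on $\mathcal{A}$ then $x(t) := L u(t)$ solves \eqref{ODE_IMP} on $L\mathcal{A}$; uniqueness for initial data in $L\mathcal{A}$ then implies both positive invariance of $L\mathcal{A}$ (completing part 2) and the dynamical correspondence in part 3.

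Finally, for part 4 I would use shape theory and cellularity. Because $L\mathcal{A}$ has finite topological dimension and sits inside $\mathbb{R}^m$ with $m > 6$, cellularity results (McMillan's criterion together with its shape-theoretic reformulation) allow $L\mathcal{A}$ to be written as the intersection of a decreasing sequence of closed topological $m$-balls $B_n$ with $B_n \subseteq N_{1/n}(L\mathcal{A})$. Choosing $B_n$ inside the $\varepsilon$-neighbourhood and further modifying $\mathcal{F}$ on $\mathbb{R}^m \setminus L\mathcal{A}$ so that the vector field points strictly into $B_n$ on $\partial B_n$ and so that some larger ball is absorbing, the standard construction $\mathcal{X} := \bigcap_{t \geq 0} S^{\mathcal{F}}(t) B_n$ produces a global attractor containing $L\mathcal{A}$ and contained in $B_n \subseteq N_\varepsilon(L\mathcal{A})$. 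The \emph{main obstacle} is the simultaneous management of analytic and topological regularity at the extension step: the extension must be log-Lipschitz (for uniqueness), must leave $L\mathcal{A}$ positively invariant (for which one cannot appeal to smooth tangency, since $L\mathcal{A}$ is only a topological image of $\mathcal{A}$), and must be compatible with the cellular neighbourhoods $B_n$. Reconciling these three constraints by replacing tangency with cellular confinement is where the shape-theoretic input does the essential work.
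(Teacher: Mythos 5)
Your analytic half (the linear embedding with log-Lipschitz inverse via the Assouad hypothesis, the Whitney/McShane extension of $L\mathcal{G}L^{-1}$ preserving the logarithmic modulus, and Osgood's criterion for uniqueness, whence invariance of $L\mathcal{A}$ and the correspondence of trajectories) is essentially the paper's Proposition \ref{prop1} and is sound, with one caveat: Osgood's criterion only applies when the logarithmic exponent $\gamma$ in the modulus $r\left(\log (C/r)\right)^{\gamma}$ satisfies $\gamma \le 1$, so ``log-Lipschitz of some order'' is not automatically enough and the exponent must be tracked against $m$ and $d$. The genuine gaps are both in your treatment of part \textit{4}. First, you assert via ``McMillan's criterion together with its shape-theoretic reformulation'' that $L\mathcal{A}$ is an intersection of topological $m$-cells in $\mathbb{R}^m$. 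But cell-likeness (having the shape of a point --- which you must first establish for $\mathcal{A}$; it follows from $\mathcal{A}$ being a global attractor in the contractible space $H$, not from finite dimensionality and $m>6$ alone) does \emph{not} imply cellularity: McMillan's theorem requires in addition the cellularity criterion on complements of neighbourhoods, which depends on how the set is embedded and need not hold for $L\mathcal{A}\subseteq\mathbb{R}^m$. The paper circumvents this in Proposition \ref{inv_cell} by passing to $L'u=(Lu,0)\in\mathbb{R}^{m+1}$, where Daverman's theorem guarantees that a cell-like compactum lying in a hyperplane is cellular in the ambient space; this costs one dimension and is exactly why the hypothesis reads $m>d+1$ rather than $m>d$. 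Your argument as written skips this step and the cellularity claim is unjustified.

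Second, even granting cellularity, your plan to ``modify $\mathcal{F}$ so that the vector field points strictly into $B_n$ on $\partial B_n$'' is not meaningful as stated: the $B_n$ are only topological cells, their boundaries carry no normal directions, and you give no mechanism for producing a differentiable inward-pointing field nor for absorbing \emph{every} bounded set, which a global attractor requires. This is the actual content of the paper's Lemma \ref{phi3}: Brown's Schoenflies-type collapse gives a homeomorphism $\mathbb{R}^{m+1}\setminus L\mathcal{A}\cong\mathbb{S}^m\times(0,+\infty)$, the Kirby--Siebenmann product structure theorem (this is where $m>5$ is genuinely used) converts the radial coordinate into a proper $\mathcal{C}^{\infty}$ function $\psi$ with nonvanishing gradient on the complement, and a reparametrisation $b\circ\psi$ extends $\mathcal{C}^r$ across $L\mathcal{A}$ with vanishing gradient there; the gradient flow of the resulting proper Lyapunov function $\phi$, added to a cut-off of your extended field, then produces the attractor $\mathcal{X}\subseteq B_{\varepsilon}(L\mathcal{A})$ containing $L\mathcal{A}$. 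You correctly identify this reconciliation of analytic and topological regularity as ``the main obstacle'', but naming it is not resolving it; as it stands the proposal contains no proof of part \textit{4}.
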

\noindent We recall that the Hausdorff distance between two
non-empty subsets $X,Y \subset H$ is defined by
    $\dist_{\rm H}(X,Y)=\max\big(\dist (X,Y),\dist (Y,X)\big).$

Although item {\it 4.}\ is not ideal, we do obtain uniqueness of
solutions which is certainly desirable. The construction in Eden et
al.\ (1994), for example, has the projection of $\mathcal A$ as a
global attractor, but the finite-dimensional system of ODEs obtained
lacks uniqueness (in fact $\mathcal F$ is not even continuous).

The assumption that $\mathcal{G}$ is Lipschitz continuous on
$\mathcal{A}$ is strong - probably too strong -, but for particular
cases one can obtain some information about the smoothness of the
vector field $\mathcal G$ (see Romanov (2000) and Pinto de Moura and
Robinson (2010b), for example).

\subsection*{Structure of the paper}

The proof of Theorem \ref{prop:def} is a blend of analytical and
topological techniques, and splits naturally into the following
steps:
\begin{itemize}
    \item the existence of a linear embedding $L|_{\mathcal{A}} : \mathcal{A}
    \longrightarrow \mathbb{R}^m$ with a sufficiently regular inverse
    (namely, Lipschitz with a logarithmic correction),
    \item the construction of a system of ODEs in $\R^m$ that reproduces the dynamics of
    $\mathcal{A}$ in $L \mathcal{A}$, using the
    regularity of ${(L|_{\mathcal{A}})}^{-1}$ to guarantee that it has unique solutions,
    \item the existence of a system of ODEs in $\R^m$
    that, perhaps after replacing $L$ by a different (but related)
    linear embedding $L'$,  has $L'\mathcal{A}$ as a global attractor consisting entirely of fixed
    points and
    \item a suitable combination of the systems of ODEs constructed
    in the previous two steps that satisfies the conclusion of Theorem \ref{prop:def}.
\end{itemize}

The first step has already been dealt with in the mathematical
literature and will be addressed in Section \ref{sec:primera}, where
we limit ourselves to a discussion of the role played by the Assouad
dimension, i.e.\ the hypothesis ${\rm dim}_A(\mathcal{A} -
\mathcal{A}) < \infty$ in Theorem \ref{prop:def} and the embedding
theorem that we will be using. The second step is the content of
Proposition \ref{prop1} and closes Section \ref{sec:primera}.

In Section \ref{sec:segunda} we change gear and use topological
techniques to provide a proof of step three. The purely topological
arguments are contained in Propositions \ref{inv_cell} and
\ref{prop:top}, whereas Lemma \ref{phi3} and Proposition
\ref{prop:aux} provide the link with differential equations.
Although Lemma \ref{phi3} is less general than the analogous result
in G\"unther (1995), our proof is significantly simpler as it does
not involve piecewise linear topology. Finally, Section
\ref{sec:ultima} brings together previous results to prove Theorem
\ref{prop:def}.

We keep the notation introduced so far for the rest of the paper.

\section{Embedding the dynamics on $\mathcal{A}$ into Euclidean space \label{sec:primera}}

We first need to find an embedding of $\mathcal{A}$ into a
finite-dimensional Euclidean space. Recall that an embedding $L :
\mathcal{A} \longrightarrow \mathbb{R}^m$ is a map that is a
homeomorphism onto its image. This is a well known topological
problem which was solved in the first half of the past century (see
Hurewicz and Wallman, 1948), but in our case we need $L$ to be
``sufficiently regular'' as described below, and more care is
needed.

Since \eqref{ODE_IMP} has to reproduce the dynamics on
$\mathcal{A}$, its right hand side $f(x)$ has to bear a close
relation with $\mathcal{G}$ on the image $L \mathcal{A}$ of
$\mathcal{A}$: essentially it needs to be $L \mathcal{G} L^{-1}$. To
guarantee uniqueness of solutions for \eqref{ODE_IMP} some
regularity has to be required on $L \mathcal{G} L^{-1}$; the
standard one is Lipschitz continuity. Since $\mathcal{G}$ was
already assumed to be Lipschitz continuous only $L$ and $L^{-1}$
need to be taken care of.

Ma\~n\'e (1981) proved that if the Hausdorff dimension of the set
$\mathcal A-\mathcal A$ of differences between elements of $\mathcal
A$ is finite, then a generic projection $L$ of $H$ onto a subspace
of sufficiently high dimension is injective on $\mathcal A$. Since
projections onto finite-dimensional spaces are linear and
continuous, they are Lipschitz, which solves the problem of the
regularity of $L$. However, the condition on the Hausdorff dimension
of $\mathcal A-\mathcal A$ is not sufficient to guarantee any
regularity for ${(L|_{\mathcal{A}})}^{-1}$ (see Robinson, 2009).

Suppose for a moment that $L^{-1}$ was also required to be Lipschitz
restricted to $\mathcal A$, so that $L$ would be bi-Lipschitz. That
is, there would exist a constant $C > 0$  such that \[\frac{1}{C}
\|u - v\| \le |L(u) - L(v)| \le C \| u - v \| \qqfa u,v \in \mathcal
A,\] where $|\cdot|$ denotes some norm in $\mathbb{R}^m$. Assouad
(1983) introduced a dimension, the Assouad dimension $\dim_{\A}$
(whose definition is recalled in the following paragraph), that is
invariant under bi-Lipschitz mappings and is finite for subsets of
Euclidean space. Thus if $\mathcal{A}$ is to be embedded in a
bi-Lipschitz way into $\mathbb{R}^{m}$ we must have
$\dim_{\A}(\mathcal{A}) < \infty$.

A metric space $(X,d)$ is said to be {\em $(M,s)$-homogeneous} (or
simply {\em homogeneous}) if any ball of radius $r$ can be covered
by at most $M(r/\rho)^s$ smaller balls of radius $\rho$, for some
$M\ge 1$ and $s\ge 0$. The {\em Assouad dimension} of $X$,
$\dim_{\A}(X)$, is the infimum of all $s$ such that $(X,d)$ is
$(M,s)$-homogeneous, for some $M\ge 1$ (of course, if $X$ is not
$(M,s)$ homogeneous for any $M$ and $s$, then we define
$\dim_{\A}(X) = \infty$). Olson (2002) proved that if the
intersection with $X$ of any ball of radius $r$ can be covered by at
most $K$ balls of radius $r/2$, where $K$ is independent of $r$,
then $X$ has finite Assouad dimension. This is called the
\emph{doubling property}. For more details, see Luukkainen (1998)
and Olson (2002).

Olson and Robinson (2010) proved that, if $\dim_{\A}(\mathcal{A} -
\mathcal{A}) < \infty$, then there exists a bi-Lipschitz embedding
of $\mathcal A$ into an Euclidean space except for a logarithmic
correction term. Inspired by Olson (2002) and Olson and Robinson
(2010), Robinson (2010) proved the following embedding result that
improves the exponent of the logarithmic correction term:

\begin{theorem}[Robinson, 2010]\label{Rb10}
Let $\mathcal A$ be a compact subset of a real Hilbert space $H$
such that $\dim_{\A}(\mathcal A-\mathcal A)<s<m$. If
\begin{equation}\label{newgamma}
 \gamma> \frac{2+m}{2(m-s)},
\end{equation}
then there exists a prevalent set \footnote{The term `prevalence'
was coined by Hunt et al.\ (1992) and generalizes the notion of
`Lebesgue almost every' from finite to infinite-dimensional spaces.
The same notion was, essentially, used earlier by Christensen (1973)
in a study of the differentiability of Lipschitz mappings between
infinite-dimensional spaces. Let $V$ be a normed linear space. A
Borel subset $S\subset V$ is {\em prevalent} if there exists a
compactly supported probability measure $\mu$ such that
$\mu(S+x)=1$, for all $x\in V$. In particular, if $S$ is prevalent
then $S$ is dense in $V$.} of linear maps $L:H \longrightarrow \R^m$
that are injective on $X$ and $\gamma$-almost bi-Lipschitz, i.e.
there exist $\delta_{L}>0$, $C_{L}>0$ such that
\begin{equation}\label{Rb10embed}
  \frac{1}{C_{L}}\frac{\|u-v\|}{(-\log\|u-v\|)^{\gamma}} \le
  |L(u)-L(v)| \le C_{L}\|u-v\|,
\end{equation}
for all $u,v \in \mathcal A$ with $\|u-v\| \le \delta_{L}$.
\end{theorem}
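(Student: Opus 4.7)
The plan is a probabilistic/prevalence argument in the tradition of Ma\~n\'e (1981), Hunt and Kaloshin (1999), and Olson--Robinson (2010). I would construct a compactly supported probability measure $\mu$ on the space of bounded linear maps from $H$ to $\R^m$ such that, for every fixed $L_0$, $\mu$-almost every $L$ makes $L_0 + L$ satisfy (\ref{Rb10embed}); the desired set of embeddings is then prevalent by definition. A natural candidate is the law of $Lu = \sum_n \alpha_n \langle u, e_n \rangle\, \xi_n$, where $\{e_n\}$ is an orthonormal basis of $H$, the positive weights $\alpha_n$ form a summable sequence ensuring that $\mu$ has compact support, and the $\xi_n$ are independent random vectors uniformly distributed on $[-1,1]^m$.

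The upper Lipschitz inequality in (\ref{Rb10embed}) is automatic for any $L$ with finite operator norm. The lower inequality rests on two ingredients. First, an anti-concentration estimate: for every fixed $z \in H$ with $\|z\| > 0$,
\[
\mathbb{P}\bigl( | L_0 z + L z | < \epsilon \bigr) \le C\bigl( \epsilon / \|z\| \bigr)^{m},
\]
uniformly in $L_0$---a small-ball bound for a sum of rescaled independent uniform random vectors in $\R^m$. Second, a multiscale covering argument using the Assouad hypothesis: for each dyadic scale $r_k = 2^{-k}$ the annular slice $A_k := \{ z \in \mathcal{A} - \mathcal{A} : r_{k+1} \le \|z\| \le r_k \}$ admits, at scale $r_k \eta_k$, a net of cardinality at most $C \eta_k^{-s}$, which projects to an $\eta_k$-net of directions on the unit sphere. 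The Lipschitz upper bound shows that, modulo a tolerance $C_L \eta_k$, the lower inequality on this net forces it on all of $A_k$. Taking both $\eta_k$ and the target lower bound $\alpha_k$ of order $k^{-\gamma}$ (with $\alpha_k$ large enough to absorb the Lipschitz tolerance), a union bound at scale $k$ gives a failure probability of order $\eta_k^{-s} \alpha_k^m = k^{-\gamma(m-s)}$; summability over $k$ together with Borel--Cantelli then yields the inequality $\mu$-a.s.\ for all but finitely many scales, producing the random threshold $\delta_L > 0$ in (\ref{Rb10embed}).

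The logarithmic correction $(-\log\|u-v\|)^{\gamma}$ is intrinsic to this approach: the Assouad hypothesis only controls the covering of each annulus $A_k$ separately, while the full set of directions $\{ z/\|z\| : z \in (\mathcal{A} - \mathcal{A})\setminus\{0\} \}$ may have infinite Assouad dimension on the ambient sphere, forcing the angular nets to shrink at each scale. The main technical obstacle, I expect, is reaching the sharp threshold $\gamma > (2+m)/(2(m-s))$ of (\ref{newgamma}): the naive balance above only delivers $\gamma > 1/(m-s)$, and the improvement---the content of Robinson (2010) over earlier work---requires a finer analysis of the probe-dependent small-ball constants, in particular of how the anti-concentration estimate depends on the distribution of $z$ across the modes $\{e_n\}$, as well as of the interplay between the ambient and angular scales in the covering.
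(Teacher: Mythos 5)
This theorem is not proved in the paper at all: it is imported verbatim from Robinson (2010) (building on Olson and Robinson (2010)), so there is no internal proof to compare yours against. Your outline does follow the same general strategy as that literature --- prevalence witnessed by a compactly supported measure on linear maps of the form $Lu=\sum_n \alpha_n\langle u,e_n\rangle\xi_n$, a small-ball (anti-concentration) estimate, dyadic decomposition of $\mathcal A-\mathcal A$ with nets whose cardinality is controlled by the Assouad hypothesis, and a Borel--Cantelli argument producing the random threshold $\delta_L$.

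There is, however, a genuine gap, and you have in effect located it yourself. The anti-concentration estimate $\mathbb{P}(|L_0z+Lz|<\epsilon)\le C(\epsilon/\|z\|)^m$ with $C$ uniform over $z\in H$ cannot hold for your measure: compact support forces $\alpha_n\to 0$, and for $z$ concentrated in a high mode $e_N$ the random part of $Lz$ is only $\alpha_N\xi_N$, so the small-ball constant degrades like $\alpha_N^{-m}$. This is not a technical refinement to be added later; it is the entire content of the theorem. A quick sanity check confirms this: if the uniform estimate held, your union bound would give failure probability $\eta_k^{-s}\alpha_k^{m}\sim k^{-\gamma(m-s)}$ and hence the conclusion for every $\gamma>1/(m-s)$, which is \emph{weaker as a threshold} than $(2+m)/(2(m-s))$ and tends to $0$ as $m\to\infty$ --- contradicting the sharpness result of Pinto de Moura and Robinson (2010a) quoted immediately after the theorem, which shows the bound $\gamma>1/2$ cannot be beaten as $m\to\infty$. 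The exponent $(2+m)/(2(m-s))$ arises precisely from balancing the admissible decay of the $\alpha_n$ (needed for compact support of $\mu$) against the degradation of the small-ball constant for probes $z\in\mathcal A-\mathcal A$ with mass in the tail modes, using the finite Assouad dimension of $\mathcal A-\mathcal A$ to control how badly that can happen at each scale. Until that trade-off is carried out quantitatively, the proposal does not establish the stated inequality (\ref{newgamma}); as written it would either prove something false or prove nothing, depending on whether one grants the unjustified uniform estimate.
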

\noindent Note that for any $\gamma>1/2$ we can choose $m$ large
enough to obtain a $\gamma$-almost bi-Lipschitz embedding into
$\R^m$. Pinto de Moura and Robinson (2010a) presented an example of
an orthogonal sequence in a Hilbert space $H$ that shows that this
bound on the logarithmic exponent $\gamma$ in Theorem \ref{Rb10} is
sharp as $m \longrightarrow \infty$.

Although reasonable, the hypothesis $\dim_{\A}(\mathcal{A} -
\mathcal{A})< \infty$ is quite restrictive, since there are no
methods available to find a bound for the Assouad dimension of
global attractors associated with dissipative equations. And, even
then, $\dim_{\A}(\mathcal{A})< \infty$ still does not imply that
$\dim_{\A}(\mathcal{A}-\mathcal{A})<\infty$ (see Olson (2002) for
details).
Moreover, only evolution equations that possess inertial manifolds
are known to satisfy this assumption and, in this case, a
finite-dimensional systems of ODEs that reproduce the behavior on
the $\mathcal A$ is already known to exist.
Nevertheless, we will assume that $\dim_{\A}(\mathcal{A} -
\mathcal{A})< \infty$ in order to study its consequences.

To conclude this section we use Theorem \ref{Rb10} above to
construct a system of ordinary differential equations with unique
solutions that reproduces the dynamics on $\mathcal A$, under the
assumptions of Theorem \ref{prop:def}.

\begin{proposition}\label{prop1}
Under the hypotheses of Theorem \ref{prop:def} and with the same
notation, for any $m > d$ there exist a system of ODEs in $\R^m$
\begin{equation}\label{ODE_R^m}
 \frac{{\rm d}x}{{\rm d}t}=g(x)
\end{equation}
and a bounded linear map $L : H \longrightarrow \R^m$ such that:
\begin{enumerate}
    \item[\textit{1.}] the function $g : \R^m \longrightarrow \R^m$ is bounded and Lipschitz
    except for a logarithmic correction,
    \item[\textit{2.}] the ODE \eqref{ODE_R^m} has unique solutions,
    \item[\textit{3.}] the restriction $L|_{\mathcal{A}} : \mathcal{A}
    \longrightarrow L \mathcal{A}$ is an embedding whose image
    is invariant under \eqref{ODE_R^m},
    \item[\textit{4.}] for every solution $u(t)$ of \eqref{eqG} on the
    attractor $\mathcal A$ there exists a unique solution $x(t)$
    of \eqref{ODE_R^m} such that
\begin{equation}
 u(t)=L^{-1}(x(t)).
\end{equation}
\end{enumerate}
\end{proposition}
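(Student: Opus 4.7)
The plan is to build on Theorem \ref{Rb10}. For the given $m > d$, I would choose $s$ with $d < s < m$ and $\gamma$ satisfying \eqref{newgamma} and invoke Theorem \ref{Rb10} to obtain a bounded linear map $L : H \longrightarrow \R^m$ which is injective on $\mathcal{A}$ and satisfies \eqref{Rb10embed}. This already yields the embedding $L|_{\mathcal{A}} : \mathcal{A} \to L\mathcal{A}$ required in item \emph{3}. I would then define the natural candidate vector field on $L\mathcal{A}$ by
\[
\tilde g(x) := L\,\mathcal{G}(L^{-1}x), \qquad x \in L\mathcal{A}.
\]
Writing $x_i = L u_i$ and using the Lipschitz continuity of $\mathcal{G}$ on $\mathcal{A}$ together with the linearity of $L$ gives $|\tilde g(x_1) - \tilde g(x_2)| \le \|L\|\,\Lip(\mathcal{G})\,\|u_1-u_2\|$, and the lower estimate in \eqref{Rb10embed} converts $\|u_1-u_2\|$ into a log-corrected multiple of $|x_1-x_2|$ (the upper estimate in \eqref{Rb10embed} replaces $-\log\|u_1-u_2\|$ by $-\log|x_1-x_2|$ up to an additive constant). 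The net bound is
\[
|\tilde g(x_1) - \tilde g(x_2)| \le C_1\, |x_1-x_2| \bigl(-\log|x_1 - x_2|\bigr)^{\gamma}
\]
for $|x_1-x_2|$ small, so $\tilde g$ is Lipschitz on $L\mathcal{A}$ modulo the logarithmic correction.

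The main technical step is to extend $\tilde g$ from the compact set $L\mathcal{A}$ to a globally defined $g : \R^m \to \R^m$ that is bounded and retains essentially the same modulus of continuity. I would apply the McShane/Whitney extension componentwise with the concave modulus $\omega(r) = C_1 r(-\log r)^{\gamma}$ (which is concave, hence subadditive, for sufficiently small $r$), and then multiply the result by a smooth cutoff identically $1$ on a neighbourhood of $L\mathcal{A}$ to enforce boundedness without destroying the modulus. This produces a $g$ fulfilling item \emph{1}. Uniqueness of solutions to \eqref{ODE_R^m} then follows from Osgood's criterion, because $\int_{0}^{\delta} dr/\omega(r) = +\infty$, settling item \emph{2}.

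For items \emph{3} and \emph{4}, given $u_0 \in \mathcal{A}$ set $u(t) = S(t)u_0$; backward invariance of $\mathcal{A}$ extends this to all $t \in \R$ with $u(t) \in \mathcal{A}$. Then $x(t) := L u(t)$ lies in $L\mathcal{A}$ and satisfies $\dot x(t) = L\,\mathcal{G}(u(t)) = g(x(t))$, since $g = \tilde g$ on $L\mathcal{A}$. Uniqueness of solutions forces $x(t)$ to be the only solution of \eqref{ODE_R^m} through $Lu_0$, which shows that $L\mathcal{A}$ is invariant under the flow of $g$ and that $u(t) = L^{-1}(x(t))$. I expect the most delicate point to be the extension preserving the log-Lipschitz modulus: this is slightly beyond the classical Lipschitz McShane construction, but goes through because $\omega$ is concave near $0$ and $L\mathcal{A}$ is compact, keeping the relevant constants finite.
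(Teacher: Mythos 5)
Your proposal follows essentially the same route as the paper: invoke Theorem \ref{Rb10} to obtain the almost bi-Lipschitz embedding $L$, push the vector field forward to $g_1 = L\mathcal{G}L^{-1}$ on $L\mathcal{A}$, compute its log-Lipschitz modulus of continuity from the lower bound in \eqref{Rb10embed}, extend by McShane, and conclude uniqueness (hence invariance of $L\mathcal{A}$ and item \emph{4}) via Osgood's criterion. The only cosmetic difference is your explicit cutoff to enforce boundedness of the extension, a detail the paper glosses over; both arguments share the same unremarked caveat that Osgood requires the exponent $\gamma \le 1$.
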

\begin{proof}
It follows from Theorem \ref{Rb10} that there exists a bounded
linear map $L$ from $H$ into $\R^m$, that is injective on $\mathcal
A$ and has a Lipschitz continuous inverse on $L\mathcal A$ except
for a logarithmic correction term with logarithmic exponent
$\gamma$.

If $x(t)=Lu(t)$, where $u(t)\in \mathcal A$, then the embedded
vector field on $L\mathcal A$ is given by
\begin{equation*}
\frac{{\rm d}x}{{\rm d}t}=L\mathcal GL^{-1}(x), \ x\in L\mathcal A.
\end{equation*}
The function $g_1: L\mathcal A \longrightarrow \R^m$ such that $
 g_1(x)=L\mathcal{G}(L^{-1}(x))
$ is certainly continuous and bounded, since $L\mathcal A$ is
compact.

Next we shall consider the modulus of continuity of $g_1$. Given 
$u,v \in H$, define $Lu=x$ and $Lv=y$. It follows from Theorem
\ref{Rb10} that
\begin{equation*}
  |x-y|\ge \frac{1}{C_L}\frac{\|L^{-1}x-L^{-1}y\|}{\Big(-\log\big(\|L^{-1}x-L^{-1}y\|\big)\Big)^{\gamma}}
  \end{equation*}
Consequently, since $|Lu-Lv|\le C_L\|u-v\|$, for every $x,y \in
L\mathcal A$,
\begin{eqnarray*}
 \|L^{-1}x-L^{-1}y\| &\le& C_L \Big(-\log\big(\|L^{-1}x-L^{-1}y\|\big)\Big)^{\gamma}|x-y| \\
 &\le& C_L\Bigg (
 \log\bigg(\frac{C_L}{|x-y|} \bigg) \Bigg
 )^{\gamma}|x-y|\le C_L \ f_1(|x-y|),\\
\end{eqnarray*}
where
\begin{equation}\label{logterm}
  f_1(|x|)=|x|\Bigg (
 \log\bigg(\frac{C_L}{|x|} \bigg) \Bigg )^{\gamma}.
\end{equation}
Since we assumed that $\mathcal G$ is Lipschitz continuous, it
follows that
\begin{equation*}
  |g_1(x)- g_1(y)|\le C_L K \|L\|_{{\rm op}} f_1(|x-y|).
\end{equation*} Hence $g_1$ is Lipschitz continuous except for a logarithmic
correction term. The modulus of continuity $\omega$ of $g_1$ is
therefore the convex continuous function defined by
\begin{equation*}
  \omega(r)=C_L K \|L\|_{{\rm op}} f_1(r)= C_0 r \Big ( \log \big(C_L/r \big) \Big
  )^{\gamma}, \quad{\rm for \ } r\ge0,
\end{equation*}
 where $C_0=C_L K \|L\|_{{\rm op}}$ is a constant.

One can now use the extension theorem due to Mc Shane (1934) (see
also Stein, 1982) to extend the function $g_1$ to a function $g:\R^m
\longrightarrow \R^m$ that is Lipschitz continuous except for a
logarithmic correction term such that
\begin{equation}\label{Lip}
  |g(x)-g(y)|\le M\omega(|x-y|),
\end{equation}
for some $M>0$. It follows from \eqref{Lip} that there exists a
$T>0$ such that the initial value problem
\begin{equation}\label{IVP}
  \frac{{\rm d}x}{{\rm d}t}=g(x), \quad x(0)=x_0
\end{equation}
has at least one solution on $[0,T]$.

Now assume that $x(t)$ and $y(t)$ are solutions of \eqref{IVP} with
initial conditions $x(0)=x_0$ and $y(0)=y_0$, respectively. Let $
r(t)=|x(t)-y(t)|. $
Since the modulus of continuity $\omega(r)$ of $g$ is continuous for
$r \ge 0 $, convex and verifies
\begin{equation}\label{int_infty}
 \int_{0}^{1}\frac{{\rm d}r}{\omega(r)}=\int_{\ln (C_L)}^{\infty}s^{-\gamma}{\rm d}s
 =+\infty, \quad {\rm for \ } 0<\gamma \le 1,
 \end{equation}
we can use Osgood's Criterion (see Hartman (1964), for example) to
show that \eqref{IVP} has at most one solution on any interval $[0,
T]$, if the exponent $\gamma$ of the logarithmic term in
\eqref{logterm} is no larger than one. Since $g$ is continuous and
bounded from $\R^m$ into $\R^m$, it follows that any solution of the
initial value problem \eqref{IVP} exists for all time. Therefore the
solution of \eqref{IVP} through $x_0=Lu_0$ with $u_0\in \mathcal A$
can be uniquely given by
\begin{equation*}
  x(t)=Lu(t).
\end{equation*}
\end{proof}

\section{Making $L \mathcal{A}$ an attractor \label{sec:segunda}}

In the previous section we embedded $\mathcal{A}$ into some
finite-dimensional space $\mathbb{R}^m$ via a linear map $L : H
\longrightarrow \mathbb{R}^m$ and showed that there is a
differential equation \eqref{ODE_R^m} in $\mathbb{R}^m$ that has
unique solution and reproduces the dynamics of $\mathcal{A}$ on $L
\mathcal{A}$. To obtain a complete translation of the situation in
$H$ onto $\mathbb{R}^m$ we would like $L \mathcal{A}$ to be a global
attractor for \eqref{ODE_R^m}, which is not usually the case. As we
mentioned in the Introduction, we will only be able to modify
\eqref{ODE_R^m} in such a way that the new dynamical system still
reproduces the dynamics of $\mathcal{A}$ on $L \mathcal{A}$ and has
a global attractor $\mathcal{X}$ lying within any prescribed
(arbitrarily small) neighbourhood of $L\mathcal{A}$. We do not know
if one can construct a vector field such that $L \mathcal{A}$
itself, with the dynamics projected from $\mathcal{A}$, is a global
attractor.

In this section we show that $L \mathcal{A}$ can be made the global
attractor, comprised of equilibria, for an entirely new system of
ODEs in $\R^m$ \eqref{A}. Then in the next section we will use
\eqref{A} to add a correction term to \eqref{ODE_R^m} that will make
its solutions enter asymptotically any prescribed neighbourhood of
$\mathcal{A}$.

Difficulties arise because there is a topological obstruction to the
existence of the system of ODEs \eqref{A} having $L \mathcal{A}$ has
a global attractor: it is known that any global attractor in
Euclidean space has a property called {\em cellularity} (the
definition is recalled below), but nothing guarantees that $L
\mathcal{A}$ is cellular. So the first thing we will do is to
improve $L$ to a new linear map, temporarily denoted by $L'$, such
that $L' \mathcal{A}$ is indeed cellular. This is Proposition
\ref{inv_cell} (it will involve increasing the dimension $m$ of the
target space by one).
Then in Lemma \ref{phi3} we show that every cellular set in
Euclidean space is a global attractor for a system of ODEs and apply
this result to $L' \mathcal{A}$.

This section is built on ideas from Garay (1991) and G\"unther
(1995). The first paper singles out cellularity as a distinctive
property of attractors for flows and the second uses smoothing
results from piecewise linear topology to replace general flows by
flows arising from differential equations.

\subsection{Improving the embedding $L$}

We begin by recalling what cellularity means. A set $C$ is called a
{\em $m$-cell} if there exists a homeomorphism from $B_{\R^m}(1)$
onto $C$, where $B_{\R^m}(1)$ is the closed unit ball centered at
the origin in $\R^m$. A subset $X \subseteq \mathbb{R}^m$ is {\em
cellular} in $\mathbb{R}^m$ if there exists a cellular sequence for
$X$, that is, a sequence $(C_i)_{i \in \mathbb{N}} \subseteq
\mathbb{R}^m$ of $m$-cells that are neighbourhoods of $X$ in
$\mathbb{R}^m$ and such that $\bigcap_{i \in \mathbb{N}} C_i = X$.
Equivalently, $X$ is cellular if given any neighbourhood $U$ of $X$
there exists a $m$-cell $C \subseteq U$ that is a neighbourhood of
$X$.

It is interesting to bear in mind that whether a set $X$ is cellular
or not depends not only on its topological type, but also on how it
is embedded in $\mathbb{R}^m$.

\begin{proposition}\label{inv_cell} Let $\mathcal A$ be a
global attractor in $H$ and let $L : H \longrightarrow \mathbb{R}^m$
be a linear embedding. Then the map $L':H \longrightarrow \R^{m+1}$
defined by $L'u=(Lu,0)$ is a linear embedding whose image
$L'\mathcal A$ is cellular in $\mathbb{R}^{m+1}$, provided $m \geq
3$.
\end{proposition}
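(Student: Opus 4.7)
The map $L'$ is trivially a linear embedding on $\mathcal{A}$: linearity and continuity are built into the definition, and injectivity on $\mathcal{A}$ follows at once from the injectivity of $L$, so $L'|_{\mathcal{A}}$ is a continuous bijection from a compact set onto its image and hence a homeomorphism. The substantive content of the proposition is therefore the claim that the image $L'\mathcal{A}$ — which sits entirely inside the hyperplane $\mathbb{R}^m \times \{0\} \subset \mathbb{R}^{m+1}$ — is cellular.

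My plan is to deduce cellularity from McMillan's cellularity criterion: a compact subset $X$ of a manifold of suitable dimension is cellular if and only if it is \emph{cell-like} and satisfies the \emph{$1$-LCC condition}, namely that every neighbourhood $U$ of $X$ contains a smaller neighbourhood $V$ in which any loop in $V \setminus X$ is null-homotopic in $U \setminus X$. The problem thus reduces to verifying both conditions for $L'\mathcal{A}$ in $\mathbb{R}^{m+1}$.

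Cell-likeness I would establish using shape theory. The global attractor $\mathcal{A}$ is the intersection of a decreasing family of positively invariant neighbourhoods, each of which deformation-retracts onto $\mathcal{A}$ within the contractible space $H$ along the semiflow $S(t)$; this identifies the shape of $\mathcal{A}$ with that of $H$, which is trivial. Since $L'|_{\mathcal{A}}$ is a homeomorphism, $L'\mathcal{A}$ inherits the shape of a point and so is cell-like.

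For the $1$-LCC condition the hyperplane placement becomes decisive. Given a loop $\gamma$ in $V \setminus L'\mathcal{A}$ for $V$ a sufficiently small neighbourhood, I would perturb $\gamma$ slightly in the last coordinate to push it off the hyperplane entirely; because $L'\mathcal{A}$ is concentrated at height zero, the perturbation homotopy avoids $L'\mathcal{A}$ and places the deformed loop inside an open half-space $\mathbb{R}^m \times (0,\infty)$ that is homeomorphic to $\mathbb{R}^{m+1}$ and hence simply connected, so the loop contracts there. The hypothesis $m \geq 3$ (so that the ambient dimension is $m+1 \geq 4$) is what gives enough room to carry out both the perturbation and the subsequent null-homotopy within the prescribed larger neighbourhood $U$, and to appeal to the appropriate form of McMillan's criterion. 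I expect this last dimension-sensitive step — quantifying the perturbation and null-homotopy so that they stay inside $U \setminus L'\mathcal{A}$, and invoking a version of the criterion valid in dimension four — to be the main technical obstacle.
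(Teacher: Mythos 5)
Your first half coincides with the paper's: the paper quotes Kapitanski and Rodnianski (2000, Theorem 3.6) to get ${\rm Sh}(\mathcal{A}) = {\rm Sh}(H) = {\rm Sh}({\rm point})$ and transports this through the homeomorphism $L'|_{\mathcal{A}}$, exactly as you sketch (your phrase ``each positively invariant neighbourhood deformation-retracts onto $\mathcal{A}$'' is too strong -- the inclusion is only a \emph{shape} equivalence, not a homotopy equivalence -- but the conclusion that $L'\mathcal{A}$ is cell-like is correct and is what is needed). Where the paper then simply cites Daverman (1986, Corollary 5A, Section 18) -- every cell-like subset of the hyperplane $\mathbb{R}^m \times \{0\}$ is cellular in $\mathbb{R}^{m+1}$ for $m \geq 3$ -- you propose to re-derive that fact from McMillan's cellularity criterion. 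That is the right result to aim at, but your verification of the criterion is where the proof breaks down.

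The gap is the claim that a loop $\gamma$ in $V \setminus L'\mathcal{A}$ can be ``perturbed slightly in the last coordinate to push it off the hyperplane'' by a homotopy that avoids $L'\mathcal{A}$. Write $\gamma(s) = (\alpha(s),\beta(s))$; a point with $\beta(s) < 0$ must pass through height $0$ during the upward translation, and it does so at the unchanged horizontal position $\alpha(s)$, which may perfectly well lie in $L\mathcal{A}$ (only the pair $(\alpha(s),\beta(s))$ was assumed to miss $L'\mathcal{A}$, not $\alpha(s)$ to miss $L\mathcal{A}$). More seriously, no argument that uses only the hyperplane placement can succeed: $S^2 \times \{0\} \subset \mathbb{R}^3 \times \{0\} \subset \mathbb{R}^4$ sits in a hyperplane, yet a small meridian circle linking it is essential in every deleted neighbourhood, so the cellularity criterion fails for it. Hence the null-homotopy must be manufactured from the \emph{cell-likeness of $L\mathcal{A}$ inside $\mathbb{R}^m$} -- i.e.\ from a null-homotopy of a small neighbourhood of $L\mathcal{A}$ into a larger one -- combined with the two half-spaces; this is precisely the content of the Daverman corollary the paper invokes, and it is not supplied by your perturbation argument. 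Finally, since the proposition allows $m = 3$, the ambient dimension is $4$, where McMillan's theorem in its classical form (ambient dimension $\geq 5$) does not apply and one must appeal to the four-dimensional version due to Freedman--Quinn; you correctly identify this as an obstacle but leave it unresolved, whereas the cited corollary already covers $m \geq 3$.
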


Due to the fact mentioned above that the cellularity of a set
depends on how it is embedded, we cannot prove Proposition
\ref{inv_cell} directly by saying that $\mathcal{A}$ is cellular
(because it is an attractor) and then $L \mathcal{A}$ is cellular
because it is homeomorphic, via $L$, to $\mathcal{A}$. We need to
use a different property of $\mathcal{A}$, which {\em is} invariant
under homeomorphisms. This is {\em shape}. Shape theory is a
weakening of homotopy theory that makes it extremely useful to deal
with complicated sets, roughly by overlooking their local structure.
The advantage for us is that if two spaces are homeomorphic, then
they have the same shape. In fact, something even stronger is true:
if two spaces have the same homotopy type, then they have the same
shape. We refer the reader to Borsuk (1975) and Marde{\u{s}}i{\'{c}}
and Segal (1982) for detailed information about shape theory, which
is becoming a powerful tool in the study of topological dynamics
(see G\"unther and Segal (1993), Sanjurjo (1995), Robinson (1999)).

\begin{proof}[Proof of Proposition \ref{inv_cell}] By Theorem 3.6 in
Kapitanski and Rodnianski (2000, p. 233) the set $\mathcal{A}$ has
the same shape as $H$. It is a standard fact that $H$ has the
homotopy type of a point, because the map $H \times [0,1] \ni (u,t)
\longrightarrow (1-t) \cdot u \in H$ provides a homotopy between the
identity ${\rm id} : H \longrightarrow H$ and the constant map $0 :
H \longrightarrow H$. Therefore $H$ has the shape of a point and
consequently so does $\mathcal{A}$. Since shape is invariant under
homeomorphisms, $L \mathcal{A}$ also has the shape of a point.
Thus\footnote{Daverman uses the concept of cell-likeness instead of
``having the shape of a point'', but both are equivalent. See
Section 15 in Daverman (1986).} by Daverman (1986, Corollary 5A,
Section 18) the set $L \mathcal{A} \times \{0\}$ is cellular in
$\mathbb{R}^{m+1}$, provided $m \geq 3$. But $L \mathcal{A} \times
\{0\}$ is precisely $L' \mathcal{A}$.
\end{proof}

As a side remark, and given that in our final result $L \mathcal{A}$
is not the global attractor but only closely approximated by global
attractors $\mathcal{X}$, one may wonder if our need for it to be
cellular is an accidental consequence of our method of proof. It is
not. Given an open neighbourhood $U$ of $L \mathcal{A}$, find a
system of ODEs that has a global attractor $\mathcal{X} \subseteq
U$. Since $L \mathcal{A}$ is invariant and $\mathcal{X}$ is a global
attractor, necessarily $L \mathcal{A} \subseteq \mathcal{X}$. The
set $\mathcal{X}$ is cellular, so there exists a cell $C \subseteq
U$ that is a neighbourhood of $\mathcal{X}$, hence of $L
\mathcal{A}$. Consequently $L \mathcal{A}$ has to be cellular.

\subsection{Cellular sets are global attractors for systems of ODEs}

Next we will show that if $X$ is a cellular subset of $\R^{m+1}$,
then there exists a system of ordinary differential equations
\eqref{A} with $X$ as its global attractor. G\"unther (1995) proved
a similar result for compact sets with the shape of a finite
polyhedron, but he did not need to control the size of the region of
attraction (whereas we want it to be all of $\mathbb{R}^{m+1}$). By
restricting ourselves to a less general setting and considering only
compacts sets with the shape of a point, we are able to give a
simpler proof that does not involve piecewise linear topology. The
difficulties arise in passing from well known topological results to
differentiable ones. Rather than using the uniqueness of
differentiable structures on $\mathbb{R}^n$ to do this (compare
Grayson and Pugh (1993, Corollary 2.6) for example) we have adopted
a different approach closer to G\"unther (1995) in spirit.

\begin{lemma}\label{phi3}
Given a cellular subset $X$ of $\mathbb{R}^{m+1}$, with $m > 5$,
there is a mapping $\phi : \mathbb{R}^{m+1} \longrightarrow
[0,+\infty)$ of class $\mathcal{C}^r$, where $r$ can be chosen to be
arbitrarily large, such that the equation
\begin{equation}
 \label{A} \dot{x} = -\nabla \phi(x)
\end{equation} has $X$ as a global attractor. Furthermore,
the mapping $\phi$ can be chosen to satisfy:
\begin{itemize}
  \item[(i)] $\phi(x) =0 \Leftrightarrow x \in X$ and
  \item[(ii)]  $\phi$ is proper, that is, $\phi^{-1}([s,t])$
  is compact for any $s < t \in \mathbb{R}$.
\end{itemize}
\end{lemma}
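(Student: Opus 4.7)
The plan is to build $\phi$ from a shrinking sequence of smooth cells around $X$. By cellularity of $X$ in $\R^{m+1}$ there is a nested sequence $C_1 \supset C_2 \supset \ldots$ of topological $(m{+}1)$-cells, each a neighbourhood of $X$, with $C_{i+1} \subset {\rm int}\,C_i$ and $\bigcap_i C_i = X$. Since $m{+}1 \geq 7$, I would appeal to high-dimensional smoothing to replace this sequence by \emph{smoothly} embedded cells $N_1 \supset N_2 \supset \ldots$ with the same nesting and intersection properties. This is where the hypothesis $m>5$ enters, and it is what allows one to sidestep the piecewise linear topology used in G\"unther (1995).

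Next I would turn the $\partial N_i$ into the regular level sets of a smooth exhaustion. Using bicollar neighbourhoods of each $\partial N_i$, consistently oriented so that the transverse coordinate increases outwards, I define a smooth function $\rho : \R^{m+1} \setminus X \to (0,\infty)$ whose value on $\partial N_i$ is a prescribed constant $c_i$ (with $c_i \searrow 0$) and which interpolates smoothly and strictly monotonically between these values on each annular region $N_i \setminus {\rm int}\, N_{i+1}$. The consistent orientation of the bicollars ensures that the pieces glue with a nonvanishing derivative across every $\partial N_i$, so $\rho$ has no critical points on $\R^{m+1} \setminus X$. Outside $N_1$, I would extend $\rho$ to a proper smooth function tending to $+\infty$ with nonvanishing gradient, for example a function of $|x|^2$ matched smoothly across $\partial N_1$.

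Now pick a smooth strictly increasing function $F : [0,\infty) \to [0,\infty)$ with $F(0)=0$ and with all derivatives of $F$ vanishing sufficiently fast at $0$. With the $c_i$ chosen to tend to $0$ fast enough relative to the collar widths, the function $\phi := F\circ \rho$ on $\R^{m+1}\setminus X$, extended by $0$ on $X$, is $\mathcal{C}^r$ on all of $\R^{m+1}$; it is nonnegative, vanishes exactly on $X$, is proper (its sublevel sets are $\rho^{-1}([0,F^{-1}(c)])$, which are compact), and has $\nabla\phi = F'(\rho)\,\nabla\rho$ vanishing precisely on $X$. For the gradient system $\dot x = -\nabla\phi(x)$, properness gives global existence and precompact forward orbits; since $\phi$ is then a strict Lyapunov function whose critical set equals $X$, LaSalle's invariance principle forces every orbit to converge to $X$, and $X$ is invariant because $\nabla\phi \equiv 0$ there. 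Hence $X$ is the global attractor of \eqref{A}.

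The main obstacle is the smoothing step: cellularity only yields topological cells, and producing a nested sequence of \emph{smoothly} embedded cells with the same shrinking and intersection properties requires genuine input from high-dimensional topology, which is precisely where the assumption $m>5$ is used. A secondary, mostly bookkeeping issue is the joint choice of the bicollars, the values $c_i$, and the flattening $F$ so that $\phi$ is genuinely $\mathcal{C}^r$ at $X$ rather than merely continuous; standard flat bump functions together with a sufficiently fast rate $c_i \to 0$ and $F$ flat to sufficient order at $0$ handle this.
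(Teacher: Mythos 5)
Your overall architecture matches the paper's: both proofs first manufacture a proper $\mathcal{C}^{\infty}$ function $\rho$ (the paper's $\psi$) on $\mathbb{R}^{m+1}\setminus X$ with no critical points and $\rho\to 0$ at $X$, and then flatten it by post-composing with a function that is flat at $0$ so that the extension by zero over $X$ is $\mathcal{C}^r$; the Lyapunov argument at the end is the same. The difference, and the problem, lies in how you produce $\rho$. The paper does it in one global step: Brown's decomposition theorem collapses $X$ to a point, giving a homeomorphism $\mathbb{R}^{m+1}\setminus X\cong\mathbb{S}^m\times(0,+\infty)$, and the Kirby--Siebenmann product structure theorem (this is where $m>5$ enters) upgrades this to a diffeomorphism onto $(\mathbb{S}^m)_{\sigma}\times(0,+\infty)$, after which $\rho$ is just the second projection. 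You instead propose a nested sequence of \emph{smoothly} embedded cells $N_i$ and an interpolating function on each annulus $N_i\setminus\mathrm{int}\,N_{i+1}$. Two steps here are genuinely unjustified. First, the existence of the smooth cells is exactly the hard content of the lemma and cannot be waved through as ``high-dimensional smoothing'': cellularity only provides topological cells, and inserting a smooth cell neighbourhood of $X$ inside a given topological one is not an off-the-shelf fact (in the paper it is a \emph{consequence} of the Kirby--Siebenmann step, since the level sets of $\psi$ bound such neighbourhoods). Second, bicollars do not suffice to build a critical-point-free function on $N_i\setminus\mathrm{int}\,N_{i+1}$ with the two boundary spheres as level sets of distinct values: by elementary Morse theory such a function exists only if that cobordism is a product $\mathbb{S}^m\times[0,1]$, so you would need to invoke the $h$-cobordism theorem (valid here since $m+1\geq 7$ and the region is a simply connected homology $h$-cobordism between standard spheres). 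As written, ``consistently oriented bicollars ensure the pieces glue with nonvanishing derivative'' is false for a non-product annulus.

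There is also a smaller gap in the flattening step. Choosing $F$ ``flat to sufficient order at $0$'' presupposes that the blow-up of the derivatives of $\rho$ near $X$ can be dominated, level set by level set, by a function of $\rho$ itself that still admits a smooth primitive vanishing at $0$. The paper handles this carefully: it defines $M(t)$ as the maximum of the relevant partial derivatives over the level set $F_t$, proves $M$ is upper semicontinuous using properness, and applies Dowker's insertion theorem to find a continuous $0<c(t)<t/M(t)$ whose primitive serves as the flattening diffeomorphism, iterating once per degree of differentiability. Your sketch assumes this can be arranged by tuning the constants $c_i$ and the collar widths, but without an argument of this type it is not clear that any single smooth $F$ works. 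None of these gaps is fatal in principle --- the $h$-cobordism route can likely be carried through --- but each is a missing idea rather than bookkeeping, and together they constitute most of the actual work of the proof.
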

\noindent If Lemma \ref{phi3} holds, then $\nabla \phi(x) = 0
\Leftrightarrow x \in X$ since the zeros of $\nabla \phi(x)$ are
precisely the equilibria of \eqref{A}, of which there cannot be any
outside of $X$. Conversely, if $\phi : \mathbb{R}^{m+1}
\longrightarrow [0,+\infty)$ is any $\mathcal{C}^r$ mapping such
that $\nabla \phi(x) = 0 \Leftrightarrow x \in X$ and $\phi(x) = 0
\Leftrightarrow x \in X$, then by Lyapunov's theorem $X$ is a global
attractor for $\dot{x} = -\nabla\phi(x)$. Thus we only need to
construct such a $\phi$, which we do first on $\mathbb{R}^{m+1}
\backslash X$ and then extend to all of $\mathbb{R}^{m+1}$.

The proof gets a little involved because our cellularity hypothesis
is a purely topological notion but we want a differentiable map as
an outcome. Therefore we start with the following topological result
and then improve it to a differentiable one in Proposition
\ref{prop:aux}. The set $\mathbb{S}^m$ is the unit sphere in
$\mathbb{R}^{m+1}$, that is $\mathbb{S}^m = \{x \in \mathbb{R}^{m+1}
: \|x\| = 1\}$.

\begin{proposition} \label{prop:top} Let $X$ be a cellular subset of
$\mathbb{R}^{m+1}$. There exists a homeomorphism $h :
\mathbb{R}^{m+1} \backslash X \longrightarrow \mathbb{S}^m \times
(0,+\infty)$ such that the second coordinate of $h(x)$ converges to
zero when $x \longrightarrow X$.
\end{proposition}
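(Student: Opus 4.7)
The goal is to construct a homeomorphism $h : \mathbb{R}^{m+1} \setminus X \longrightarrow \mathbb{S}^m \times (0,+\infty)$ with the stated approach-to-$X$ behaviour. The plan is to reduce the problem to the model case $X = \{0\}$, where the statement is elementary: the polar-coordinate map
\[
\mathbb{R}^{m+1} \setminus \{0\} \longrightarrow \mathbb{S}^m \times (0,+\infty), \qquad y \longmapsto \left(\frac{y}{\|y\|},\, \|y\|\right)
\]
is a homeomorphism whose second coordinate $\|y\|$ tends to zero exactly as $y \longrightarrow 0$.

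The reduction invokes a classical result from decomposition theory (see e.g.\ Daverman (1986), Chapters 7 and 17): if $X \subseteq \mathbb{R}^{m+1}$ is cellular, then the decomposition of $\mathbb{R}^{m+1}$ whose only non-degenerate element is $X$ is shrinkable, so the quotient $\mathbb{R}^{m+1}/X$ that collapses $X$ to a point is homeomorphic to $\mathbb{R}^{m+1}$. Equivalently, there is a continuous proper surjection $q : \mathbb{R}^{m+1} \longrightarrow \mathbb{R}^{m+1}$ such that $q(X)$ is a single point---which after a translation we take to be the origin---and such that $q|_{\mathbb{R}^{m+1} \setminus X}$ is a homeomorphism onto $\mathbb{R}^{m+1} \setminus \{0\}$. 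Such a $q$ can be produced as a limit of homeomorphisms of $\mathbb{R}^{m+1}$ that successively shrink a nested cellular sequence $C_1 \supset C_2 \supset \cdots$ with $\bigcap_{i \in \mathbb{N}} C_i = X$ and $C_{i+1} \subseteq \mathrm{int}\, C_i$.

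Granted $q$, the map
\[
h(x) = \left(\frac{q(x)}{\|q(x)\|},\, \|q(x)\|\right), \qquad x \in \mathbb{R}^{m+1} \setminus X,
\]
is a homeomorphism onto $\mathbb{S}^m \times (0,+\infty)$ as the composition of the homeomorphism $q|_{\mathbb{R}^{m+1} \setminus X}$ with the polar-coordinate homeomorphism above. The limit condition is then immediate: for $p \in X$ and $x \longrightarrow p$ in $\mathbb{R}^{m+1} \setminus X$, continuity of $q$ at $p$ gives $q(x) \longrightarrow q(p) = 0$, hence the second coordinate $\|q(x)\|$ of $h(x)$ tends to zero.

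The main obstacle is the invocation of the shrinkability theorem for cellular decompositions; while entirely classical, it is a non-trivial piece of geometric topology. A more hands-on alternative would be to build $h$ directly from the cellular sequence by identifying each shell $C_i \setminus \mathrm{int}\, C_{i+1}$ with an annulus $\mathbb{S}^m \times [2^{-i-1}, 2^{-i}]$ and gluing the pieces along the intermediate spheres, but this requires local flatness of the boundaries $\partial C_i$ and an appeal to the topological annulus theorem, which presents extra subtleties compared with the quotient-theoretic route.
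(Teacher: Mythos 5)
Your proposal is correct and follows essentially the same route as the paper: obtain a map that collapses $X$ to a point and restricts to a homeomorphism of $\mathbb{R}^{m+1} \setminus X$ onto $\mathbb{R}^{m+1} \setminus \{0\}$, then compose with the polar-coordinate homeomorphism onto $\mathbb{S}^m \times (0,+\infty)$. The only difference is the source of the collapsing map — the paper cites Theorem 1 of Brown (1960) to produce it on a large ball $Q$ (identity on $\partial Q$, then extended by the identity outside), whereas you invoke the shrinkability of the cellular decomposition from Daverman; these are essentially equivalent statements.
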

\begin{proof} Let $Q$ be a ball in $\mathbb{R}^{m+1}$ centered
at the origin and big enough so that $X$ is contained in the
interior of $Q$. By Theorem 1 in Brown (1960) there exists a
continuous map $c : Q \longrightarrow Q$ that is onto, injective on
$Q \setminus X$, collapses $X$ to a single point $p$ in the interior
of $Q$ and is the identity on the boundary of $Q$. It is easy to
construct a homeomorphism of $Q$ onto itself that takes $p$ to $0$
and is the identity on the boundary, so we can assume that $p = 0$.

The properties of $c$ imply that $c|_{Q \setminus X} : Q \setminus X
\longrightarrow Q \setminus \{0\}$ is a homeomorphism and if $x
\longrightarrow X$ then $c(x) \longrightarrow 0$. Extend $c|_{Q
\setminus X}$ to all of $\mathbb{R}^{m+1} \backslash X$ by letting
it be the identity outside $Q$. Finally, \[h(x) := \left(
\frac{c(x)}{\|c(x)\|},\|c(x)\|\right)\] has the required properties.
\end{proof}

To make $h$ differentiable we require some smoothing results for
manifolds, rather than maps, which we take from Kirby and Siebenmann
(1977). Recall that a differential manifold is a topological
manifold equipped with a differential structure, that is an atlas of
coordinate charts such that the chart changes are
$\mathcal{C}^{\infty}$. A map between smooth manifolds is
$\mathcal{C}^{\infty}$ if its local expression in charts is
$\mathcal{C}^{\infty}$, and a diffeomorphism if it is invertible
with a $\mathcal{C}^{\infty}$ inverse (for more detailed definitions
we refer the reader to Kirby and Siebenmann, 1977).

\begin{proposition} \label{prop:aux} Let $X$ be a cellular
subset of $\mathbb{R}^{m+1}$, with $m \geq 5$. There exists a
mapping $\psi : \mathbb{R}^{m+1} \backslash X \longrightarrow
(0,+\infty)$ of class $\mathcal{C}^{\infty}$ such that:
\begin{itemize}
  \item[(i)] $\nabla \psi (x) \neq 0$ for every
 $x \in \mathbb{R}^{m+1} \backslash X$,
  \item[(ii)] $\psi(x) \longrightarrow 0$ when $x \longrightarrow X$
 and
  \item[(iii)]  $\psi$ is proper.
\end{itemize}
\end{proposition}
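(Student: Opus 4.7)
The plan is to promote the continuous structure of Proposition~\ref{prop:top} to a smooth one and then take $\psi$ to be the pulled-back second coordinate. Concretely, I aim to produce a \emph{diffeomorphism} $F:\mathbb{R}^{m+1}\setminus X\longrightarrow \mathbb{S}^m\times(0,+\infty)$ and set $\psi:=\pi_2\circ F$; properties (i), (ii), and (iii) then follow at once from the corresponding properties of the projection $\pi_2$ (non-vanishing gradient because $\pi_2$ is a submersion; limit behaviour at $X$ because the end of $\mathbb{R}^{m+1}\setminus X$ at $X$ corresponds to $t\to 0$; and properness because $F^{-1}$ is a homeomorphism and $\mathbb{S}^m\times[s,t]$ is compact).

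To build $F$, I would fix a large smooth closed ball $B\subset\mathbb{R}^{m+1}$ with $X\subset\mathrm{int}(B)$ and split $\mathbb{R}^{m+1}\setminus X$ along $\partial B$. The exterior piece $\mathbb{R}^{m+1}\setminus\mathrm{int}(B)$ is smoothly diffeomorphic to $\mathbb{S}^m\times[1,+\infty)$ via standard polar coordinates. For the interior piece I would invoke Siebenmann's end theorem from Kirby and Siebenmann (1977): by Proposition~\ref{prop:top} the end of the smooth manifold $\mathrm{int}(B)\setminus X$ at $X$ is topologically collared by $\mathbb{S}^m\times(0,\varepsilon)$, and is therefore tame and simply connected (since $m\geq 5$ forces $\pi_1(\mathbb{S}^m)=0$), as well as finitely dominated of the homotopy type of $\mathbb{S}^m$ (so the Wall obstruction is automatically zero). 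Since the ambient dimension is at least six, Siebenmann's theorem yields a smooth compactification: a compact smooth cobordism $\bar M$ with $\partial\bar M=\partial B\sqcup\Sigma$ whose interior is diffeomorphic to $\mathrm{int}(B)\setminus X$. Cellularity of $X$ shows that $B\setminus X$ is homotopy equivalent to $\mathbb{S}^m$, so $\bar M$ is a simply connected h-cobordism of dimension $m+1\geq 6$; by Smale's h-cobordism theorem, $\bar M$ is diffeomorphic to $\mathbb{S}^m\times[0,1]$, hence $B\setminus X$ is smoothly $\mathbb{S}^m\times(0,1]$. Gluing to the exterior collar along $\partial B$ then delivers the desired diffeomorphism $F$.

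The main obstacle is this smoothing step: translating the purely topological product structure furnished by Proposition~\ref{prop:top} into a genuine smooth product. This is precisely where the hypothesis $m\geq 5$ is used, as both Siebenmann's collaring theorem and the h-cobordism theorem require dimension at least six. Once $F$ is in hand, the verification of (i)--(iii) for $\psi=\pi_2\circ F$ is routine; and a $\mathcal{C}^\infty$ map $\psi$ is obtained because Siebenmann's construction and the h-cobordism product structure are both $\mathcal{C}^\infty$, compatible with the exterior polar-coordinate chart along $\partial B$.
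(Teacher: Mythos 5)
Your argument is correct in outline, and it reaches the conclusion by a genuinely different route through high-dimensional topology than the paper does. The paper keeps the whole manifold $\mathbb{R}^{m+1}\setminus X$ intact: it transports the standard smooth structure through the homeomorphism $h$ of Proposition~\ref{prop:top} to obtain an a priori exotic smooth structure on $\mathbb{S}^m\times(0,+\infty)$, applies the Kirby--Siebenmann Product Structure Theorem (Theorem 5.1, p.~31, together with the remark that the straightening diffeomorphism $g$ can be taken to move points a distance at most $1$) to split off the $(0,+\infty)$ factor smoothly, and sets $\psi={\rm pr}_2\circ g\circ h$; the displacement bound on $g$ is then what identifies the end at $X$ with $t\to 0$ and yields (ii), after properness has ruled out limits in $(0,+\infty)$. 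You instead cut along a large ball, handle the exterior by polar coordinates, and treat the interior end at $X$ via Siebenmann's end theorem followed by the $h$-cobordism theorem. Both routes need $m+1\geq 6$, and yours has the minor advantages that the sphere factor comes out with its standard smooth structure (the paper only gets some structure $\sigma$, which is harmless) and that (ii) is immediate once the end at $X$ is identified with $t\to 0$; the cost is two heavier theorems in place of one and a gluing step you should not wave away: the two parametrizations of $\partial B$ from inside and outside differ by a diffeomorphism of $\mathbb{S}^m$ that must be absorbed into one of them, and the glued map must be smoothed across $\partial B$ using uniqueness of collars. Two small further points: the end theorem is Siebenmann's 1965 thesis rather than a result of Kirby and Siebenmann (1977), and the tameness, simple connectivity and finite domination of the end are most cleanly justified directly from cellularity (each small cell neighbourhood $C_i$ satisfies $C_i\setminus X\simeq\mathbb{S}^m$ by Brown's theorem) rather than from the bare statement of Proposition~\ref{prop:top}, whose product neighbourhoods $h^{-1}\bigl(\mathbb{S}^m\times(0,\varepsilon)\bigr)$ are not obviously a neighbourhood basis of the end without a properness argument.
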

\begin{proof}
Consider the map $h$ obtained in Proposition \ref{prop:top}. We
would like $\psi$ to be the second coordinate of $h$, but this
choice would not be differentiable in general. Thus we first have to
smooth $h$ out. Let $\Sigma$ be the differentiable structure
$\mathbb{R}^{m+1} \backslash X$ inherits from $\mathbb{R}^{m+1}$ as
an open subset, and transport it via $h$ to obtain a new
differentiable structure $h \Sigma$ on $\mathbb{S}^m \times
(0,+\infty)$; clearly by construction $h : {\left(\mathbb{R}^{m+1}
\backslash X\right)}_{\Sigma} \longrightarrow {\left(\mathbb{S}^m
\times (0,+\infty)\right)}_{h \Sigma}$ is a diffeomorphism. Now by
Kirby and Siebenmann (1977, Theorem 5.1, p. 31) (and Remark 1
following that theorem) there is a diffeomorphism $g :
{\left(\mathbb{S}^m \times (0,+\infty)\right)}_{h \Sigma}
\longrightarrow {\left(\mathbb{S}^m\right)}_{\sigma} \times
(0,+\infty)$, where $\sigma$ is some suitable differentiable
structure on $\mathbb{S}^m$ (we need the hypothesis $m > 5$
precisely for this theorem to work). By Remark 1 following Kirby and
Siebenmann (1977, Theorem 5.1, p. 31) one can require, and it will
be technically convenient to do so, that
${\rm dist}\big(y,g(y)\big) \leq 1$ for every $y \in \mathbb{S}^m
\times (0,+\infty)$, where ${\rm dist}$ is the maximum of the
distances in $\mathbb{S}^m$ and $(0,+\infty)$.

The projection onto the second factor ${\rm pr}_2 :
{\left(\mathbb{S}^m\right)}_{\sigma} \times (0,+\infty)
\longrightarrow (0,+\infty)$ is obviously a $\mathcal{C}^{\infty}$
mapping (by definition of what a product differentiable structure
is) and its differential is never zero. Then define $\psi := {\rm
pr}_2 \circ g \circ h$, which makes the diagram \[\xymatrix{
{(\mathbb{R}^{m+1} \backslash X)}_{\Sigma} \ar[r]^-h \ar[rrd]_{\psi}
& {\left(\mathbb{S}^m \times (0,+\infty)\right)}_{h \Sigma} \ar[r]^g
& {\left(\mathbb{S}^m\right)}_{\sigma} \times (0,+\infty)
\ar[d]^{{\rm pr}_2} \\ & & (0,+\infty) }\] commutative. Clearly
$\psi$ is $\mathcal{C}^{\infty}$, because it is a composition of
$\mathcal{C}^{\infty}$ maps. Now we have to check that $\psi$
satisfies all the properties in the statement of the proposition:
\smallskip

({\it i}\/) It is clear that $\nabla \psi(x) \neq 0$, because $g$
and $h$ are diffeomorphisms (thus their differentials are
invertible) and ${\rm pr}_2$ satisfies $\nabla{\rm pr}_2(x) \neq 0$.
\smallskip

({\it iii}\/) It is convenient to deal with this one before ({\it
ii}\/). Let $s < t$, take a sequence $(x_i)_{i \in \mathbb{N}}
\subseteq \psi^{-1}([s,t])$ and denote by $(y_i,z_i) := g \circ
h(x_i)$. By hypothesis $((y_i, z_i))_{i \in \mathbb{N}} \subseteq
\mathbb{S}^m \times [s,t]$, which is a compact set, so the sequence
$((y_i,z_i))_{i \in \mathbb{N}}$ must have a convergent subsequence.
The pre-image of this subsequence under the homeomorphism $g \circ
h$ is a convergent subsequence of $(x_i)_{i \in \mathbb{N}}$. This
shows that $\psi^{-1}([s,t])$ is compact and $\psi$ is proper.
\smallskip

({\it ii}\/) Let $(x_i)_{i \in \mathbb{N}}$ be a sequence in
$\mathbb{R}^{m+1} \backslash X$ converging to $X$. We first show
that $(\psi(x_i))_{i \in \mathbb{N}}$ converges either to $0$ or
$+\infty$. Suppose not. Then it has some subsequence
$(\psi(x_{i_j}))_{j \in \mathbb{N}}$ that is contained in a compact
interval and, since $\psi$ is proper, $(x_{i_j})_{j \in \mathbb{N}}$
is contained in some compact subset of $\mathbb{R}^{m+1} \backslash
X$. This contradicts the fact that $(x_i)$ converges to $X$.

Since we required that $g$ moves points no more than $1$ unit, we
have ${\rm dist}(g \circ h(x_i)),h(x_i)) < 1$. Given that we chose
${\rm dist}$ as the maximum of the distances in $\mathbb{S}^m$ and
$(0,+\infty)$, this implies that \[{\rm dist}(\psi(x_i),{\rm pr}_2
\circ h(x_i)) = {\rm dist}({\rm pr}_2 \circ g \circ h(x_i),{\rm
pr}_2 \circ h(x_i)) < 1\] as well. Since $\psi(x_i)$ converges to
either $0$ or $+\infty$ and ${\rm pr}_2 \circ h(x_i) \longrightarrow
0$ as stated in Proposition \ref{prop:top}, it follows that
$\psi(x_i) \longrightarrow 0$.
\end{proof}

\begin{proof}[Proof of Lemma \ref{phi3}] We will construct
inductively a sequence of maps $\psi_k$, each $\psi_k$ of class
$\mathcal{C}^k$, such that $\phi := \psi_k$ proves the lemma for $r
= k$. As a first step extend the mapping $\psi$ given by Proposition
\ref{prop:aux} to all of $\mathbb{R}^{m+1}$ by letting it assume the
value $0$ on $X$, and call it $\psi_0$. This $\psi_0$ is continuous
but not differentiable near $X$, and we now use an argument hinted
at G\"unther (1995) to improve $\psi_0$ to $\psi_1$.

The idea is to let $\psi_1 := b \circ \psi_0$, where $b :
[0,+\infty) \longrightarrow [0,+\infty)$ is some diffeomorphism of
class $\mathcal{C}^1$ whose derivative near $0$ is sufficiently
small to overcome the ``roughness'' of $\psi_0$ near $X$. Formally,
for $x \in \mathbb{R}^{m+1} \backslash X$,
\[\frac{\partial}{\partial x_i} (b \circ \psi_0)(x) = (b' \circ
\psi_0)(x) \frac{\partial \psi_0}{\partial x_i}(x)\] and as $x
\longrightarrow X$ (and consequently $t = \psi_0(x) \longrightarrow
0$) we need $b'(t)$ to converge to $0$ faster than $\frac{\partial
\psi_0}{\partial x_i}(x)$ grows. We now show how to find such a $b$.

For any $t \in (0,+\infty)$ let $F_t := \{x \in \mathbb{R}^{m+1}
\backslash X : \psi_0(x) = t\}$ and
\[M(t) := \max_{x \in F_t} \left\{ \left| \frac{\partial
\psi_0}{\partial x_1}(x) \right|, \ldots, \left| \frac{\partial
\psi_0}{\partial x_m}(x) \right| \right\}.\]

Since each $F_t$ is compact, because $\psi_0$ is proper, $M(t)$ is
well defined. The condition $\nabla \psi_0(x) \neq 0$ for $x \in
\mathbb{R}^{m+1} \backslash X$ implies $M(t) > 0$ for every $t > 0$,
and clearly by construction $M\big(\psi_0(x)\big) \geq \left|
\frac{\partial \psi_0}{\partial x_i}(x) \right|$ for each $x \in
\mathbb{R}^{m+1} \backslash X$ and $1 \leq i \leq {m+1}$. Suppose
for a moment that we find a diffeomorphism $b$ such that $b'(t) \leq
\frac{t}{M(t)}$ for every $t > 0$. Then we have, for any $1 \leq i
\leq {m+1}$,
\[\frac{\partial}{\partial x_i} (b \circ \psi_0)(x) = (b' \circ
\psi_0)(x) \frac{\partial \psi_0}{\partial x_i}(x) \leq
\frac{\psi_0(x)}{M\big(\psi_0(x)\big)}\frac{\partial
\psi_0}{\partial x_i}(x) \leq \psi_0(x)\] which goes to $0$ as $x
\longrightarrow X$. Hence $\psi_1 := b \circ \psi_0$ is
$\mathcal{C}^1$ on $\mathbb{R}^{m+1}$, and its gradient on $X$ is
zero. It is still clearly regular on $\mathbb{R}^{m+1} \backslash X$
and goes to zero as $x \longrightarrow X$, so $\phi := \psi_1$
proves the lemma for $r = 1$.

If $\frac{t}{M(t)}$ were continuous, finding $b(t)$ would be a
simple matter: just take the primitive of $\frac{t}{M(t)}$ that
sends $0$ to $0$. With a little Morse theory it can be shown that
this is indeed the case; to avoid it we adopt a more elementary
approach. We begin with the following
\medskip

{\it Claim.} $M(t)$ is upper semicontinuous. That is, for each $s
\in \mathbb{R}$, the set $\{t \in (0,+\infty) : M(t) < s \}$ is
open.
\begin{proof} Fix $t_0 \in \mathbb{R}$ and $s \in
\mathbb{R}$ such that $M(t_0) < s$. We have to prove that for $t$
close enough to $t_0$ the inequality $M(t) < s$ holds.

At each point $x \in F_{t_0}$ one has $\left| \frac{\partial
\psi_0}{\partial x_i}(x) \right| < s$ for all $1 \leq i \leq {m+1}$
so, by continuity, there exists a neighbourhood $U_x$ of $x$ in
$\mathbb{R}^{m+1} \backslash X$ such that $\left| \frac{\partial
\psi_0}{\partial x_i}(y) \right| < s$ for all $y \in U_x$ and $1
\leq i \leq {m+1}$. The set $U := \bigcup_{x \in F_{t_0}} U_x$ is a
neighbourhood of $F_{t_0}$ in $\mathbb{R}^{m+1} \backslash X$. Now
clearly $F_{t_0} = \bigcap_{\varepsilon > 0}
F_{[t_0-\varepsilon,t_0+\varepsilon]}$, where
$F_{[t_0-\varepsilon,t_0+\varepsilon]} := \{y \in \mathbb{R}^{m+1}
\backslash X : \psi_0(y) \in [t_0-\varepsilon,t_0+\varepsilon]\}$.
Again because $\psi_0$ is proper, each
$F_{[t_0-\varepsilon,t_0+\varepsilon]}$ is compact, so there exists
$\varepsilon > 0$ such that $F_{[t_0-\varepsilon,t_0+\varepsilon]}
\subseteq U$. But then for $t \in [t_0 - \varepsilon,t_0 +
\varepsilon]$ we have $M(t_0) < s$, as it was to be proved.
\end{proof}

We can now find $b$. Since $M(t)$ is upper semicontinuous so is
$\frac{M(t)}{t}$, and consequently $\frac{t}{M(t)}$ is lower
semicontinuous. By a classical insertion theorem (Dowker, 1951,
Theorem 4, p. 222) it follows that there exists a continuous mapping
$0 < c(t) < \frac{t}{M(t)}$. Taking for $b(t)$ the primitive of
$c(t)$ that sends $0$ to $0$ we are finished.
\medskip

This argument can easily be adapted to provide the inductive step in
the construction of $\psi_{k+1}$ from $\psi_k$. We again let
$\psi_{k+1} := b \circ \psi_k$ for a suitable $\mathcal{C}^{k+1}$
diffemorphism $b : [0,+\infty) \longrightarrow [0,+\infty)$, but now
there are conditions to be placed on the rate at which $b^{(l)}(t)
\longrightarrow 0$ as $t \longrightarrow 0$ for every $0 \leq l \leq
k+1$. Indeed, for any multi-index $\alpha$ with $|\alpha|=k+1$ we
have
$$\frac{\partial^{\alpha}\psi_{k+1}}{\partial x^{\alpha}}
=b' \circ \psi_k \frac{\partial^{\alpha}\psi_k}{\partial
x^{\alpha}}+P\left(\frac{\partial^{\beta}\psi_k}{\partial
x^{\beta}},b^{(l)}\right)$$ on $\mathbb{R}^{m+1} \backslash X$,
where $P$ is a polynomial in partial derivatives of $\psi_k$ of
order $\leq k$ and derivatives of $b$ of order $l \le k+1$. Hence we
now need to choose $b$ subject to the conditions $b^{(l)}(0) = 0$
for every $l \leq k+1$ and $b' \circ \psi_k(x)
\frac{\partial^{\alpha}\psi_k}{\partial x^{\alpha}}(x)
\longrightarrow 0$ for $|\alpha|=k+1$ and $x \longrightarrow X$. The
first one is easy to achive; for the second one just re-read the
proof from the beginning letting \[M(t) :=  \max_{{x \in
F_t}\atop{|\alpha|=k+ 1}} \left\{ \left| \frac{\partial^{\alpha}
\psi_k}{\partial x^{\alpha}}(x) \right| \right\}.\]

\end{proof}

\section{Proof of Theorem \ref{prop:def} \label{sec:ultima}}

In this final section we assemble the results from the previous two
sections to obtain a system of ODEs \eqref{ODE_IMP} that reproduces
on $L \mathcal{A}$ the dynamics on $\mathcal{A}$ and has a global
attractor $\mathcal{X}$ as close to $L \mathcal{A}$ as required.

\begin{proof}[Proof of Theorem \ref{prop:def}] Use Proposition
\ref{inv_cell} to replace the mapping $L$ obtained in Proposition
\ref{prop1} by a new one $L' : H \longrightarrow \mathbb{R}^{m+1}$
with the additional property that its image is cellular. To keep
notation simple we rename $L'$ as $L$ and $m+1$ as $m$.

Use Lemma \ref{phi3} to obtain a $\mathcal{C}^r$ mapping $\phi :
\mathbb{R}^m \longrightarrow [0,+\infty)$ such that $L \mathcal{A}$
is a global attractor for $\dot{x} = -\nabla \phi$. Denote by
$B_{\varepsilon}(L \mathcal{A})$ the $\varepsilon$-neighbourhood of
$L \mathcal{A}$ in $\mathbb{R}^m$. Since $\phi$ is proper, there
exists $\delta > 0$ such that $P := \{x \in \mathbb{R}^{m} : \phi(x)
\leq \delta\} \subseteq B_{\varepsilon}(L\mathcal{A})$. Finally, let
$\theta : \mathbb{R}^{m} \longrightarrow [0,1]$ be a
$\mathcal{C}^{\infty}$ cut-off function such that $\theta \equiv 1$
on $L\mathcal{A}$ and $\theta \equiv 0$ outside of $P$. Take the
mapping $g$ obtained in Proposition \ref{prop1} and multiply it by
$\theta$ to make it zero outside of $P$. We shall call $f := \theta
g$; clearly $\dot{x} = f(x)$ still reproduces the dynamics of
$\mathcal{A}$ on $L \mathcal{A}$.

Now consider equations \eqref{A} and \eqref{ODE_IMP1}
\begin{align}
\dot{x} &= -\nabla \phi(x) \notag \\
\dot{x} &= f(x) - \nabla \phi(x) \label{ODE_IMP1}
\end{align}
Observe that the right hand sides of \eqref{A} and \eqref{ODE_IMP1}
coincide for $x \not\in P$. Therefore, since $\mathbb{R}^{m}
\backslash P$ is negatively invariant for \eqref{A}, it is also
negatively invariant for \eqref{ODE_IMP1} and it follows that $P$ is
positively invariant for \eqref{ODE_IMP1}.

The sets $\overline{P \cdot [t,+\infty)}$ are compact (being closed
subsets of $P$) and decreasing with increasing $t$. It is standard
that
\[\mathcal{X} := \bigcap_{t \geq 0} \overline{P \cdot [t,+\infty)}\]
is invariant and attracts $P$, i.e.\ given any $\delta > 0$ there
exists $T_{\delta} > 0$ such that $P \cdot [T_{\delta},+\infty)
\subseteq B_{\delta}(\mathcal{X})$ (see Ladyzhenskaya, 1991, Theorem
2.1). By construction, $\mathcal X$ is contained in
$B_{\varepsilon}(L\mathcal{A})$.
\smallskip

(1) $\mathcal{X}$ is a global attractor. Fix a bounded set $B
\subseteq \mathbb{R}^m$ and let \[C := \sup_{x \in B} \phi(x) \text{
and } c := \inf_{x \in B - P} \|\nabla \phi\|^2.\] Observe that $c >
0$ because $\nabla \phi$ only vanishes on $L \mathcal{A}$, of which
$P$ is a neighbourhood. Thus there exists $T > 0$ big enough so that
$C - c T < \delta$ holds.

We now claim that $x \cdot [T,+\infty) \subseteq P$ for any $x \in
B$. Since $P$ is positively invariant it clearly suffices to show
that $x \cdot t \in P$ for some $t \in [0,T]$. We reason by
contradiction, so assume that $x \cdot [0,T] \subseteq \mathbb{R}^m
\backslash P$. By the mean value theorem \[\phi(x\cdot T) = \phi(x)
+ \left.{\frac{{\rm d}}{{\rm d}s} \phi(x \cdot s)}\right|_{s = \xi}
T\] for some $\xi \in [0,T]$. Now \[\left.{\frac{{\rm d}}{{\rm d}s}
\phi(x \cdot s)}\right|_{s = \xi} = \langle \nabla\phi(x\cdot \xi),
\dot{x}(\xi) \rangle= -\|\nabla\phi(x \cdot \xi)\|^2 \leq -c,\]
where we have used the fact that $\dot{x}(\xi) = -\nabla\phi(x \cdot
\xi)$ because $x \cdot \xi \not\in P$ by assumption and
$\|\nabla\phi(x \cdot \xi)\|^2 \leq -c$ by the same token. With the
above equation and the fact that $\phi(x) \leq C$ because $x \in P$,
\[\phi(x \cdot T) \leq C - c T < \delta\] which is a contradiction
since then $x \cdot T \in P$ by definition.

Thus we see that $B \cdot [T,+\infty) \subseteq P$. Since given any
$\delta > 0$ there exists $T_{\delta} > 0$ such that $P \cdot
[T_{\delta},+\infty) \subseteq B_{\delta}(\mathcal{X})$, it follows
that $B \cdot [T+T_{\delta},+\infty) \subseteq P \cdot
[T_{\delta},+\infty) \subseteq B_{\delta}(\mathcal{X})$. Thus for $t
\geq T + T_{\delta}$ one has ${\rm dist}(B \cdot t,\mathcal{X}) <
\delta$. This implies that ${\rm dist}(B \cdot t,\mathcal{X})
\longrightarrow 0$ as $t \longrightarrow +\infty$.
\smallskip

(2) $\mathcal{X}$ contains $L\mathcal{A}$. Since $\nabla \phi$
vanishes on $L\mathcal{A}$ and $\theta \equiv 1$ on it,
\eqref{ODE_IMP1} reduces to $\dot{x} = g(x)$ when $x \in
L\mathcal{A}$. Thus $L \mathcal{A}$ is invariant for
\eqref{ODE_IMP1} and it is an immediate consequence of the fact that
$L \mathcal{A} \subseteq P$ and the expression for $\mathcal{X}$
that $L \mathcal{A} \subseteq \mathcal{X}$ (alternatively, since
$\mathcal{X}$ is the maximal compact invariant set in
$\mathbb{R}^{m}$, clearly $L \mathcal{A} \subseteq \mathcal{X}$).
\end{proof}

\section{Conclusion}

In this paper, we showed that if the compact $\mathcal A\subset H$
is the global attractor associated with a dissipative evolution
equation in 
$H$ such that the vector field $\mathcal G$ is Lipschitz continuous
on $\mathcal A$ and $\dim_{\A}(\mathcal A-\mathcal A)=d$, then there
is an ordinary differential equation in $\R^{m+1}$, with $m>d$, that
has unique solutions and reproduces the dynamics on $\mathcal A$.
Moreover, we proved that the dynamical system generated by this new
ordinary differential equation has a global attractor $\mathcal X$
arbitrarily close to $L\mathcal A$, where $L$ is a bounded linear
map from $H$ into $\R^{m+1}$ that is injective on $\mathcal{A}$.

Nevertheless, the existence of a system of ordinary differential
equation whose asymptotic behavior reproduces the dynamics on
$\mathcal A$ and has $L \mathcal A$ as a global attractor remains an
interesting open problem. In addition, the assumption that the
vector field $\mathcal G$ is Lipschitz continuous on the global
attractor $\mathcal A$ is quite strong and it would be interesting
to weaken it.

Finally, the results presented in this paper highlight the
importance of finding a general method to bound the Assouad
dimension of the set $\mathcal A-\mathcal A$, where $\mathcal A$ is
a global attractor associated with a partial differential equation
in $H$. However Eden et al.\ (1994, Lemma 2.1) showed that, for a
large class of dissipative equations for which the squeezing
property holds, there exists a constant $K>0$, such that the set
$S(T)[\mathcal A \cap B(x,r)]$ can be covered by $K$ balls of radius
$\theta r$, for some $T>0$. Hence, given its similarity with the
doubling property mentioned earlier, it might be possible to use the
above result to bound $\dim_{\A}(\mathcal A)$.


\bibliographystyle{amsplain}

\end{document}